\documentclass[11pt,reqno]{amsart}

\usepackage{amsmath,amssymb,amsfonts,amsthm}
\usepackage{hyperref}
\usepackage{mathrsfs}
\usepackage{enumitem}
\usepackage{geometry}
\hypersetup{
	colorlinks=true,
	linkcolor=blue,
	citecolor=blue,
	urlcolor=blue
}

\newtheorem{theorem}{Theorem}[section]
\newtheorem{lemma}[theorem]{Lemma}

\theoremstyle{definition}

\theoremstyle{remark}
\newtheorem{remark}[theorem]{Remark}

\newcommand{\ddc}{dd^c}

\title{Stability and strong convergence for complex Hessian equations with $L^1$ data}

\author{Truong Dinh Dat}
\address{Nguyen Trai University, 28A Le Trong Tan Street, Ha Dong, Hanoi, Viet Nam}
\email{truongdinhdat14081994qb@gmail.com}

\begin{document}
	
\begin{abstract}
	We study complex $m$-Hessian equations on bounded hyperconvex domains
	with right-hand side in $L^1(\Omega)$.
	
	The main contribution of this paper is a strong stability result
	for weak solutions in the Hessian energy sense.
	More precisely, if $f_j \to f$ in $L^1(\Omega)$ and $u_j, u$
	are the corresponding solutions, then
	\[
	\int_\Omega |u_j-u|\, H_m(u_j) \longrightarrow 0.
	\]
	
	This provides convergence in the natural energy topology
	associated to the Hessian operator, which is significantly
	stronger than convergence in capacity.
	
	For completeness, we also recall the existence of solutions
	and stability in capacity, which follow from known results
	in the literature.
\end{abstract}
	
	\keywords{Complex Hessian equation; $m$-subharmonic functions; Hessian capacity; Energy classes; Stability; Strong convergence; $L^1$ data}
	
	\subjclass[2020]{Primary 32W20; Secondary 32U05, 35J60, 35D30}
	
	\maketitle
	
	\section{Introduction}
	The theory of complex Hessian equations was initiated by Błocki \cite{Blocki2005},
	who introduced weak solutions and established fundamental properties of the Hessian operator;
	see also the survey \cite{Blocki2013}.
	Subsequently, systematic developments of pluripotential methods for complex Hessian equations
	were carried out by Dinew and Ko{\l}odziej \cite{DinewKolodziej2014,Dinew2011,Blocki2005},
	as well as Abdullaev and Sadullaev \cite{AbdullaevSadullaev2014},
	leading to existence and Dirichlet-type results under various assumptions
	 on the right-hand side. To an $m$-subharmonic function $u$, one associates the complex Hessian measure
	\[
	H_m(u) := (\ddc u)^m \wedge \beta^{n-m},
	\]
	where $\beta = \ddc |z|^2$ is the standard K\"ahler form.
	The study of equations of the form
	\begin{equation}\label{eq:main}
		(\ddc u)^m \wedge \beta^{n-m} = f \, \beta^n
	\end{equation}
	can be viewed as an intermediate theory between classical potential theory ($m=1$) and pluripotential theory ($m=n$); see \cite{Blocki2005,Lu2013}.
	
	The foundations of pluripotential theory and its applications to complex Hessian equations
	can be found in the monograph and lecture notes of Demailly \cite{Demailly}.
	In particular, the notion of $m$-subharmonic functions and the associated Hessian operator were systematically investigated, leading to existence and uniqueness results for weak solutions under various assumptions on the right-hand side.
	Inspired by Cegrell's energy classes for the Monge--Amp\`ere operator \cite{Cegrell1998,Cegrell2004} and regularization techniques for plurisubharmonic functions \cite{BlockiKolodziej}, related classes of plurisubharmonic functions with weak singularities
	were also investigated in the Monge--Amp\`ere setting;
	see Benelkourchi--Guedj--Zeriahi \cite{Benelkourchi}, several authors introduced energy classes $\mathcal{E}_m$, $\mathcal{F}_m$ and $\mathcal{E}_m^p$ adapted to the complex Hessian setting \cite{Blocki2013,DinewKolodziej2014,Lu2013}.
	
	A major breakthrough in the Monge--Amp\`ere theory was Ko\l odziej's celebrated $L^p$ estimate\cite{Kolodziej1998,Kolodziej2005}, which guarantees uniform boundedness and stability of solutions when the density belongs to $L^p$, $p>1$ \cite{Kolodziej1998,Kolodziej2005}.
	Analogous results for complex Hessian equations were subsequently obtained by Dinew and Ko\l odziej\cite{DinewKolodziej2014}.
	In particular, for $f \in L^p(\Omega)$ with $p>1$, the Dirichlet problem for \eqref{eq:main} admits a unique solution in suitable energy classes.
	
	More recently, in \cite{Ko-Ng} S. Kolodziej and N. C. Nguyen investigated stability of weak solutions to complex Hessian equations.
	Working on compact Hermitian manifolds, they established existence, uniqueness and stability results for solutions corresponding to $L^p$ data, $p>n/m$.
	Their approach combines fine properties of $m$-subharmonic functions
	with capacity estimates and comparison principles,
	building upon earlier capacity techniques in pluripotential theory
	developed by Zeriahi \cite{Zeriahi2007}, extending several fundamental results from pluripotential theory to the Hessian setting ; cf. \cite{Kolodziej2005}.
	Stability properties of solutions to complex Hessian equations
	have been investigated in several settings.
	In particular, L. Baracco, T. V. Khanh, S. Pinton and G. Zampieri studied stability and regularity properties under $L^p$ assumptions
	on the density; see \cite{BKPG}.
	
	Despite this significant progress, the case of merely integrable data remains less understood. Variational approaches to Monge--Amp\`ere equations
	have also been developed in a broader context;
	see for instance Berman \cite{Berman}.
	
	In the Monge--Ampère case, weak solutions with merely $L^1$ right-hand side
	have been intensively studied; see for instance
	\cite{DinewPlis,Liu-Zhang}.
	Uniqueness and stability in $L^1$ were obtained in \cite{DinewPlis},
	and more recently, $L^1$-stability in full mass classes
	was investigated by Liu and Zhang \cite{Liu-Zhang}.
	However, for complex Hessian equations, most existing results still require $L^p$ assumptions with $p>1$, mainly due to technical difficulties related to the lack of full monotonicity and the weaker structure of the Hessian operator.
	
	While convergence in $m$-Hessian capacity is a natural and widely used
	notion in pluripotential theory, it is relatively weak and does not
	control the behavior of the associated nonlinear measures.
	In particular, convergence in capacity does not imply convergence
	with respect to the Hessian measures $H_m(u_j)$.
	
	The main goal of this paper is to establish a stronger form of stability,
	namely convergence in the natural energy topology associated to the
	Hessian operator. More precisely, we prove that if $f_j \to f$ in $L^1$,
	then the corresponding solutions satisfy
	\[
	\int_\Omega |u_j-u|\, H_m(u_j) \to 0.
	\]
	
	Such a result can be viewed as a Hessian analogue of strong stability
	properties known in the complex Monge--Ampère setting,
	but it does not seem to have been explicitly established in the
	general $L^1$ framework for complex Hessian equations.

	Our approach relies on a careful approximation of $L^1$ data by bounded functions, combined with uniform energy estimates and refined capacity arguments.
	We adapt techniques originating from Cegrell's theory and Ko\l odziej's method to the complex Hessian framework.
	A key ingredient is a truncation procedure for the right-hand side, together with a precise control of the Hessian measure under convergence in capacity.
	
	\medskip
	\noindent
	\textbf{Main contribution.}
	While stability of solutions to complex Hessian equations has been extensively studied under $L^p$ assumptions with $p>1$, the borderline case of merely integrable data $f \in L^1(\Omega)$ remains much less understood.
	
	In particular, it is known that if $f_j \to f$ in $L^1(\Omega)$, then the corresponding solutions $u_j$ converge to $u$ in $m$-Hessian capacity (see e.g. \cite{V T Nguyen}). However, convergence in capacity is relatively weak and does not control the behavior of the nonlinear measures $H_m(u_j)$.
	
	In the complex Monge--Amp\`ere case ($m=n$), stronger stability results in $L^1$ or energy sense have been obtained (see \cite{DinewPlis, Liu-Zhang}). In contrast, for complex Hessian equations ($1 \le m < n$), such strong convergence results in the $L^1$ setting do not seem to be available in the literature, mainly due to the weaker structure of the Hessian operator and the lack of full monotonicity.
	
	The main result of this paper establishes a strong form of stability in the natural energy topology associated to the Hessian operator. More precisely, we prove that if $f_j \to f$ in $L^1(\Omega)$, then the corresponding solutions satisfy
	\[
	\int_\Omega |u_j - u| \, H_m(u_j) \longrightarrow 0.
	\]
	
	This provides a substantial strengthening of the known convergence in capacity, and can be viewed as a Hessian analogue of strong stability results in the Monge--Amp\`ere setting, extended here to the borderline case of $L^1$ data.

	The paper is organized as follows.
	In Section~2 we recall basic notions on $m$-subharmonic functions, Hessian measures and energy classes.
	Section~3 is devoted to a priori estimates and approximation results.
	In Section~4 we recall the existence and stability of solutions with $L^1$ right-hand side.
	Section~5 contains the proof of the   convergence theorems. 	In Section~6 we obtain application of the main result and conclude with several remarks.
	

	\section{Preliminaries}
	
	Throughout the paper, $\Omega \subset \mathbb{C}^n$ denotes a bounded $m$-hyperconvex domain and $1 \le m \le n$.
	We use standard notation $d = \partial + \bar\partial$ and $d^c = \frac{i}{2\pi}(\bar\partial - \partial)$, so that $\ddc = \frac{i}{\pi}\partial\bar\partial$.
	The standard K\"ahler form on $\mathbb{C}^n$ is denoted by $\beta = \ddc |z|^2$.
	
	\subsection{$m$-hyperconvex domain }
	A domain $\Omega$ is called $m$-hyperconvex if there exists 
	a negative $m$-subharmonic exhaustion function $\rho$ such that 
	\[
	\{ \rho < -c \} \Subset \Omega \quad \forall c>0.
	\]
	\subsection{$m$-subharmonic functions}
	
	A function $u : \Omega \to [-\infty,+\infty)$ is called \emph{$m$-subharmonic} if it is upper semicontinuous, not identically $-\infty$, and the currents
	\[
	(\ddc u)^k \wedge \beta^{n-k} \ge 0
	\quad \text{for all } 1 \le k \le m
	\]
	in the sense of currents.
	We denote by $SH_m(\Omega)$ the class of all $m$-subharmonic functions on $\Omega$.
	
	If $u \in SH_m(\Omega) \cap L^\infty_{\mathrm{loc}}(\Omega)$, then the complex Hessian operator
	\[
	H_m(u) := (\ddc u)^m \wedge \beta^{n-m}
	\]
	is well-defined as a positive Radon measure.
	For general $u \in SH_m(\Omega)$, the Hessian measure is defined by monotone approximation.
	
	
	\subsection{Hessian capacity}
	
	Let $K \subset \Omega$ be a compact set.
	The $m$-Hessian capacity of $K$ is defined by
	\[
	\mathrm{Cap}_m(K,\Omega)
	:= \sup \left\{ \int_K H_m(v) \; ; \;
	v \in SH_m(\Omega),\; -1 \le v \le 0 \right\}.
	\]
	This definition extends to arbitrary Borel sets by standard outer regularization.
	
	We say that a sequence $\{u_j\} \subset SH_m(\Omega)$ converges to $u$ \emph{in $m$-Hessian capacity} if for every $\varepsilon > 0$,
	\[
	\mathrm{Cap}_m\left( \{ |u_j - u| > \varepsilon \}, \Omega \right) \longrightarrow 0
	\quad \text{as } j \to \infty.
	\]
	
	
	\subsection{Energy classes}
	
	Following the Hessian analogue of Cegrell's theory, we recall the main energy classes.
	
	\begin{itemize}
		\item $\mathcal{E}_m^0(\Omega)$ is the set of functions $u \in SH_m(\Omega) \cap L^\infty(\Omega)$ such that
		\[
		\lim_{z \to \partial \Omega} u(z) = 0
		\quad \text{and} \quad
		\int_\Omega H_m(u) < +\infty.
		\]
		
		\item $\mathcal{F}_m(\Omega)$ consists of all functions $u \in SH_m(\Omega)$ for which there exists a decreasing sequence
		$\{u_j\} \subset \mathcal{E}_m^0(\Omega)$ converging pointwise to $u$ and satisfying
		\[
		\sup_j \int_\Omega H_m(u_j) < +\infty.
		\]
		
		\item $\mathcal{F}_m^1(\Omega)$ is the subclass of $\mathcal{F}_m(\Omega)$ defined by the additional condition
		\[
		\sup_j \int_\Omega |u_j| \, H_m(u_j) < +\infty,
		\]
		for some (and hence any) approximating sequence $\{u_j\} \subset \mathcal{E}_m^0(\Omega)$.
	\end{itemize}
	
	For $u \in \mathcal{F}_m(\Omega)$, the Hessian measure $H_m(u)$ is well-defined.
	
	
	\subsection{Basic properties}
	
	We collect several properties that will be used throughout the paper.
	
	\begin{itemize}
		\item (\emph{Comparison principle})
		If $u,v \in \mathcal{F}_m(\Omega)$, then
		\[
		\int_{\{u<v\}} H_m(v) \le \int_{\{u<v\}} H_m(u).
		\]
		
		\item (\emph{Continuity under monotone limits})
		If $\{u_j\} \subset \mathcal{F}_m(\Omega)$ decreases pointwise to $u \in \mathcal{F}_m(\Omega)$, then
		\[
		H_m(u_j) \rightharpoonup H_m(u)
		\quad \text{weakly as measures.}
		\]
		
		\item (\emph{Capacity estimate})\cite{DinewKolodziej2014}
		For $u \in \mathcal{F}_m(\Omega)$ and $t>0$,
		\[
		\mathrm{Cap}_m(\{u<-t\},\Omega)
		\le \frac{1}{t^m} \int_\Omega H_m(u).
		\]
	\end{itemize}
	
	The proofs of these results can be found in the works of B\l ocki, Dinew--Ko\l odziej.
	\begin{lemma}\label{lem:AC}
		Let $u\in\mathcal F_m(\Omega)$. Then for any Borel set $E\subset\Omega$ and any $t>0$,
		\[
		H_m(u)(E)
		\le
		\int_{\{u<-t\}} H_m(u)
		+
		t^m\,\mathrm{Cap}_m(E,\Omega).
		\]
		In particular, $H_m(u)$ is absolutely continuous with respect to the
		$m$-Hessian capacity.
	\end{lemma}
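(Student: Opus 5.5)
Split $E$ along the sublevel set of $u$ at height $-t$:
\[
H_m(u)(E) \le H_m(u)(E\cap\{u<-t\}) + H_m(u)(E\cap\{u\ge -t\}).
\]
The first term is at most $\int_{\{u<-t\}}H_m(u)$. The real content is therefore the bound
\[
H_m(u)(E\cap\{u\ge -t\})\le t^m\,\mathrm{Cap}_m(E,\Omega).
\]
For this we use the truncation $v:=\max(u,-t)/t$. This function is $m$-subharmonic with $-1\le v\le 0$, so it is admissible in the definition of $\mathrm{Cap}_m$.

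\textbf{Step 1: the measures agree on the open set.} On the plurifine (for $m<n$, the $m$-fine) open set $\{u>-t\}$ we have $v=u/t$. By the locality of the Hessian operator in the fine topology (the Hessian analogue of the Bedford--Taylor maximum principle, valid for $u\in\mathcal F_m$ via monotone approximation by $\mathcal E_m^0$ functions), this gives
\[
1_{\{u>-t\}}H_m(u)=t^m\,1_{\{u>-t\}}H_m(v).
\]
Hence
\[
H_m(u)(E\cap\{u>-t\})=t^m H_m(v)(E\cap\{u>-t\})\le t^m\,\mathrm{Cap}_m(E,\Omega).
\]
For compact $E$ this uses the definition directly. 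For Borel $E$ it follows by inner regularity of $H_m(v)$ together with monotonicity of the capacity.

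\textbf{Step 2: the level set $\{u=-t\}$.} The leftover set $\{u=-t\}$ is the main obstacle, since fine locality gives no information on it. The fix is to run the argument with $s$ in place of $t$ for $s>t$, using $v_s=\max(u,-s)/s$. Then
\[
\{u\ge -t\}\subset\{u>-s\},
\]
so Step 1 yields
\[
H_m(u)(E\cap\{u\ge-t\})\le s^m\,\mathrm{Cap}_m(E,\Omega).
\]
Letting $s\downarrow t$ gives the bound with $t^m$. Combined with the split above, this proves the inequality.

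\textbf{Absolute continuity.} Suppose $\mathrm{Cap}_m(E,\Omega)=0$. Then
\[
H_m(u)(E)\le\int_{\{u<-t\}}H_m(u)\quad\text{for every } t>0.
\]
Since $u\in\mathcal F_m$, the total mass $H_m(u)(\Omega)$ is finite, and the sets $\{u<-t\}$ decrease to $\{u=-\infty\}$. So the right-hand side tends to $H_m(u)(\{u=-\infty\})$ as $t\to\infty$. This is where the statement needs care: for general $u\in\mathcal F_m$, $H_m(u)$ may charge the polar set $\{u=-\infty\}$. An example is the Hessian analogue of a pluricomplex Green function, which places a Dirac mass at the pole. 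So the ``in particular'' claim holds only when $H_m(u)$ puts no mass on $\{u=-\infty\}$, for instance when $u\in\mathcal F_m^1$ or when $H_m(u)=f\beta^n$ with $f\in L^1$, the case relevant to this paper. Under that assumption the right-hand side tends to $0$, and $H_m(u)(E)=0$ as claimed. I would state this hypothesis explicitly.
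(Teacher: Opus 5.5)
Your proof of the inequality is correct. It follows the paper's skeleton: split at level $-t$, truncate to $\max(u,-t)/t$, use locality, and test against the definition of $\mathrm{Cap}_m$. At the level set, however, you are more careful than the paper. The paper writes $\int_{E\cap\{u\ge -t\}}H_m(u)=\int_{E\cap\{u\ge -t\}}H_m(\max(u,-t))$ and cites locality, but locality only covers the fine-open set $\{u>-t\}$. On $\{u=-t\}$ the identity can fail: for $m=n=1$ and $u=\log|z|$ on the unit disc, $H_1(u)$ puts no mass on the circle $\{u=-t\}$, whereas $H_1(\max(u,-t))$ is concentrated there. Your device of passing to $\{u>-s\}\supset\{u\ge -t\}$ and letting $s\downarrow t$ gets the bound from locality on a fine-open set alone. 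So you need no Demailly-type inequality to control the level set.

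You are also right about the ``in particular'' clause, which the paper's proof gets wrong. When $\mathrm{Cap}_m(E,\Omega)=0$, letting $t\to\infty$ only yields $H_m(u)(E)\le H_m(u)(\{u=-\infty\})$. The $m$-Green function $1-|z|^{2-2n/m}$ on the unit ball (or $\log|z|$ if $m=n$) lies in $\mathcal{F}_m$ and has Hessian measure a positive multiple of $\delta_0$. Yet $\mathrm{Cap}_m(\{0\},\Omega)=0$, by the capacity estimate applied to its sublevel sets. Your added hypothesis, that $H_m(u)$ not charge $\{u=-\infty\}$, is exactly the right condition. It is also necessary, since $\{u=-\infty\}$ has zero capacity. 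It holds when $H_m(u)=f\beta^n$ with $f\in L^1$, and the paper's later arguments use only the inequality, not the absolute continuity.
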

	
	\begin{proof}
For completeness, we sketch the proof.
		Fix $t>0$ and let $E\subset\Omega$ be a Borel set.
		We decompose
		\[
		H_m(u)(E)
		=
		\int_{E\cap\{u<-t\}} H_m(u)
		+
		\int_{E\cap\{u\ge -t\}} H_m(u).
		\]
		
		The first term is trivially bounded by
		\[
		\int_{E\cap\{u<-t\}} H_m(u)
		\le
		\int_{\{u<-t\}} H_m(u).
		\]
		
		To estimate the second term, we consider the truncated function
		\[
		u_t := \max(u,-t),
		\]
		which belongs to $\mathcal F_m(\Omega)$ and satisfies
		\[
		- t \le u_t \le 0.
		\]
		Hence the function $u_t/t$ is $m$-subharmonic on $\Omega$ and satisfies
		\[
		-1 \le \frac{u_t}{t} \le 0.
		\]
		
		By the locality property of the Hessian operator,
		we have $H_m(u_t) = H_m(u)$ on the set $\{u>-t\}$ and since $(\ddc(u_t/t))^m = \frac{1}{t^m}(\ddc u_t)^m$. Therefore,
		\[
		\int_{E\cap\{u\ge -t\}} H_m(u)
		=
		\int_{E\cap\{u\ge -t\}} H_m(u_t)
		=
		t^m \int_{E\cap\{u\ge -t\}} H_m\!\left(\frac{u_t}{t}\right).
		\]
		
		By the definition of the $m$-Hessian capacity and the fact that
		$\frac{u_t}{t}$ is an admissible test function, we obtain
		\[
		\int_{E\cap\{u\ge -t\}} H_m(u)
		\le
		t^m \, \mathrm{Cap}_m(E,\Omega).
		\]
		
		Combining the above estimates yields
		\[
		H_m(u)(E)
		\le
		\int_{\{u<-t\}} H_m(u)
		+
		t^m \, \mathrm{Cap}_m(E,\Omega),
		\]
		which proves the desired inequality.
		
		In particular, letting $t\to\infty$ shows that $H_m(u)$ is absolutely
		continuous with respect to the $m$-Hessian capacity.
	\end{proof}
	\begin{remark}
		Lemma~\ref{lem:AC} provides a quantitative form of the absolute continuity
		of the Hessian measure with respect to the $m$-Hessian capacity,
		which plays a crucial role in the proof of strong convergence results.
	\end{remark}
	
	\section{A priori estimates and approximation}
	
	In this section we establish uniform estimates and approximation results for solutions of the complex Hessian equation.
	These results form the technical core of the paper and allow us to treat right-hand sides belonging merely to $L^1(\Omega)$.
	
	
	\subsection{Approximation of the right-hand side}
	
	Let $f \in L^1(\Omega)$, $f \ge 0$.
	We consider the standard truncation
	\[
	f_j := \min(f,j), \quad j \in \mathbb{N}.
	\]
	Then $\{f_j\}$ is an increasing sequence of bounded functions satisfying
	\[
	f_j \in L^\infty(\Omega), 
	\quad
	0 \le f_j \le f_{j+1},
	\quad
	f_j \longrightarrow f \ \text{in } L^1(\Omega).
	\]
	
	For each $j$, it is known that there exists a unique solution
	$u_j \in \mathcal{E}_m^0(\Omega)$ of the Dirichlet problem
	\begin{equation}\label{eq:approx}
		H_m(u_j) = f_j \, \beta^n.
	\end{equation}
	This follows from the results of Dinew--Ko\l odziej.
	
	The main difficulty is to obtain estimates on $\{u_j\}$ that are uniform in $j$ and depend only on $\|f\|_{L^1(\Omega)}$.
	
	
	\subsection{Uniform mass control}
	
	We begin with a simple but fundamental observation.
	
	\begin{lemma}\label{lem:mass}
		For all $j \in \mathbb{N}$, one has
		\[
		\int_\Omega H_m(u_j)
		\le \int_\Omega f \, \beta^n.
		\]
	\end{lemma}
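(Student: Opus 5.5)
The plan is to read the total mass off the equation \eqref{eq:approx} itself; no potential-theoretic machinery is needed beyond the fact that $u_j$ solves it. Since $u_j \in \mathcal{E}_m^0(\Omega)$ is bounded, $H_m(u_j)$ is a well-defined positive Radon measure on $\Omega$. By \eqref{eq:approx} it coincides, as a measure, with $f_j\,\beta^n$. Hence for every Borel set $E \subset \Omega$,
\[
H_m(u_j)(E) = \int_E f_j\,\beta^n .
\]

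First take $E = K$ compact in $\Omega$, so that the total mass $\int_\Omega H_m(u_j)$ is obtained as the supremum over compact subsets (inner regularity of the Radon measure). Then use the pointwise bound $0 \le f_j = \min(f,j) \le f$, which gives
\[
\int_K H_m(u_j) = \int_K f_j\,\beta^n \le \int_K f\,\beta^n \le \int_\Omega f\,\beta^n .
\]
Taking the supremum over $K \Subset \Omega$ yields the claim. The right-hand side is finite because $f \in L^1(\Omega)$ and $\beta^n$ is a constant multiple of Lebesgue measure.

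The only point needing care is that the equation holds as an identity of measures on all of $\Omega$, and not merely on compact subsets. This is exactly what "solution of the Dirichlet problem in $\mathcal{E}_m^0(\Omega)$" provides in the Dinew--Ko\l odziej result. Once that is granted, the estimate is pure monotonicity of the truncation. It is uniform in $j$ and depends only on $\|f\|_{L^1(\Omega)}$. In particular, it shows that the sequence $\{u_j\}$ has uniformly bounded Hessian mass, which is the input needed later to place the limit in $\mathcal{F}_m(\Omega)$.
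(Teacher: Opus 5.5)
Your proof is correct and is essentially the paper's argument: the paper also reads the total mass directly off $H_m(u_j)=f_j\,\beta^n$ and uses $0\le f_j\le f$. The detour through compact sets and inner regularity is harmless but unnecessary, since equality of the two measures on $\Omega$ already gives $\int_\Omega H_m(u_j)=\int_\Omega f_j\,\beta^n$ directly.
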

	
	\begin{proof}
		Since $H_m(u_j)=f_j\beta^n$, we have
		\[
		\int_\Omega H_m(u_j)
		=
		\int_\Omega f_j \, \beta^n
		\le
		\int_\Omega f \, \beta^n,
		\]
		because $0 \le f_j \le f$.
	\end{proof}

	
	\subsection{Capacity estimates}
	
	Uniform control of sublevel sets will be crucial for compactness arguments.
	
	\begin{lemma}\label{lem:capacity}
		 For every $t>0$ and all $j \in \mathbb{N}$,
		\[
		\mathrm{Cap}_m\big(\{u_j < -t\}, \Omega\big)
		\le \frac{1}{t^m} \int_\Omega f \, \beta^n.
		\]
	\end{lemma}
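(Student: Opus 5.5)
The plan is to combine the capacity estimate recalled in the Basic properties subsection with the uniform mass bound of Lemma~\ref{lem:mass}.

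\textbf{Step 1: the solutions lie in the right class.} By construction each $u_j$ is the solution of \eqref{eq:approx} in $\mathcal{E}_m^0(\Omega)$. Hence $u_j\in\mathcal F_m(\Omega)$: take the constant approximating sequence, which trivially has bounded total Hessian mass. So the capacity estimate
\[
\mathrm{Cap}_m(\{u<-t\},\Omega)\le \frac{1}{t^m}\int_\Omega H_m(u)
\]
applies to $u=u_j$ for every $t>0$.

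\textbf{Step 2: insert the mass bound.} Lemma~\ref{lem:mass} gives $\int_\Omega H_m(u_j)\le\int_\Omega f\,\beta^n$. Substituting this into the capacity estimate yields the claim. The constant depends only on $\|f\|_{L^1}$, so it is independent of $j$.

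\textbf{Main obstacle: the capacity estimate itself.} The only delicate point is the estimate recalled from \cite{DinewKolodziej2014}, if one wants to justify it rather than cite it. For a bounded $u\in\mathcal E_m^0(\Omega)$ I would argue as follows.

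Fix a compact $K\subset\{u<-t\}$ and a test function $v\in SH_m(\Omega)$ with $-1\le v\le 0$. Since $u<-t$ on $K$, we have $K\subset\{u/t<v-1\}$ after a small adjustment: work with $(1+\epsilon)$-scalings or strict inequalities, using $v-1\ge -2$ and the fact that $u\to 0$ at $\partial\Omega$, so the sublevel set is relatively compact.

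The comparison principle, in the form $\int_{\{u/t<w\}}H_m(w)\le\int_{\{u/t<w\}}H_m(u/t)$ with $w=v-1$ (or $w=v$ after shifting), then gives
\[
\int_K H_m(v)\le t^{-m}\int_\Omega H_m(u).
\]
This uses $H_m(v-1)=H_m(v)$. The shift requires the comparison principle to hold with boundary behaviour $\liminf(u/t - w)\ge 0$, which holds because $u\to0$ and $w\le 0$.

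Taking the supremum over $v$ and then over $K$, using inner regularity of the capacity on the quasi-open set $\{u<-t\}$, completes the argument. The paper states the estimate as known, so in practice Steps 1 and 2 suffice.
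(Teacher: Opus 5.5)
Your Steps 1 and 2 are exactly the paper's proof: since $u_j\in\mathcal{E}_m^0(\Omega)\subset\mathcal{F}_m(\Omega)$, the recalled capacity estimate applies, and Lemma~\ref{lem:mass} bounds $\int_\Omega H_m(u_j)$ by $\int_\Omega f\,\beta^n$.

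One correction to your optional sketch of the capacity estimate: the inclusion $K\subset\{u/t<v-1\}$ fails in general, because on $K$ you only know $u/t<-1$ while $v-1$ may equal $-2$. Take $w=v$ directly instead, since $u/t<-1\le v$ on $\{u<-t\}$ and $\liminf_{z\to\partial\Omega}(u/t-v)\ge 0$; moreover $\{u<-t\}$ is open because $u$ is upper semicontinuous, so no $(1+\epsilon)$-adjustment or quasi-openness argument is needed.
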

	
	\begin{proof}
		By the capacity estimate recalled in Section~2 and Lemma~\ref{lem:mass}, we obtain
		\[
		\mathrm{Cap}_m(\{u_j<-t\},\Omega)
		\le \frac{1}{t^m} \int_\Omega H_m(u_j)
		\le \frac{1}{t^m} \int_\Omega f \, \beta^n.
		\]
		This proves the claim.
	\end{proof}
	
	In particular, the sequence $\{u_j\}$ is uniformly tight with respect to the $m$-Hessian capacity.
	
	
	\subsection{Compactness and convergence in capacity}
	
	We now establish compactness of the approximating solutions.
	
	\begin{lemma}\label{lem:compact}
		Let $(u_j)\subset SH_m(\Omega)$ be uniformly bounded above and satisfy
		\[
		\sup_j \mathrm{Cap}_m(\{u_j<-t\},\Omega)\to 0 \quad \text{as } t\to\infty.
		\]
		Then there exists a subsequence converging in $m$-Hessian capacity
		to some $u\in SH_m(\Omega)$.
	\end{lemma}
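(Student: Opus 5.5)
The plan is to combine the standard $L^1_{\mathrm{loc}}$ compactness of $m$-subharmonic functions with the fact that monotone sequences converge in capacity. The uniform capacity tightness is used to reduce everything to uniformly bounded functions.

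\emph{Step 1: a subsequence converging in $L^1_{\mathrm{loc}}$.} First I rule out the degenerate case $u_j\to-\infty$ locally uniformly. If that happened, then for a fixed compact $K\Subset\Omega$ with $\mathrm{Cap}_m(K,\Omega)>0$ and any $t>0$, we would have $K\subset\{u_j<-t\}$ for $j$ large. This gives $\mathrm{Cap}_m(K,\Omega)\le \sup_j \mathrm{Cap}_m(\{u_j<-t\},\Omega)$, which tends to $0$ as $t\to\infty$, a contradiction. Since the $u_j$ are also uniformly bounded above, the classical compactness theorem for subharmonic functions applies, because $SH_m\subset SH_1$ and $SH_m$ is closed under $L^1_{\mathrm{loc}}$ limits. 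It yields a subsequence, still denoted $u_j$, converging in $L^1_{\mathrm{loc}}(\Omega)$ and almost everywhere to some $u\in SH_m(\Omega)$. Passing to a further subsequence, I may assume $\sum_j\|u_j-u\|_{L^1(K_j)}<\infty$ along an exhaustion $K_j\uparrow\Omega$.

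\emph{Step 2: reduction to bounded functions.} Fix $t>0$ and set $u_j^t:=\max(u_j,-t)$ and $u^t:=\max(u,-t)$. Since $\{|u_j-u|>\varepsilon\}\subset\{|u_j^t-u^t|>\varepsilon\}\cup\{u_j<-t\}\cup\{u<-t\}$, subadditivity of capacity together with the hypothesis shows it suffices to prove $u^t_j\to u^t$ in capacity for each fixed $t$. The term $\mathrm{Cap}_m(\{u<-t\},\Omega)$ is small for large $t$; this follows from the hypothesis by lower semicontinuity of capacity on open sets under the $L^1$ limit, or directly from the $u_j$ bounds. The same $L^1_{\mathrm{loc}}$ convergence holds for the truncations.

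\emph{Step 3: the key step and main obstacle, the bounded case.} Consider the decreasing sequences $v_k:=(\sup_{j\ge k}u_j^t)^*$ and $w_k:=\max(u^t, \ldots)$ built from them. By Hartogs' lemma and the $L^1_{\mathrm{loc}}$ convergence, $v_k\downarrow u^t$. Decreasing bounded sequences of $m$-subharmonic functions converge in capacity; this is the Hessian analogue of Bedford--Taylor, and the proof goes through quasicontinuity and the Chern--Levine--Nirenberg inequality. Hence the upper deviation $\{u_j^t>u^t+\varepsilon\}\subset\{v_j>u^t+\varepsilon\}$ has capacity tending to $0$.

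The difficult part is the lower deviation $\{u_j^t<u^t-\varepsilon\}$. Here I would use the summability from Step 1. Consider the increasing bounded sequence $\phi_k:=\inf_{j\ge k}\max(u_j^t,u^t-1)$; its upper semicontinuous regularizations increase to $u^t$ almost everywhere, and increasing sequences also converge in capacity. The task is to show that $\phi_k$ differs from its regularization only on a set of small capacity. For this I would try to estimate $\int_K (u^t-u^t_j)\,H_m(\psi)$ for admissible $-1\le\psi\le0$ by integration by parts against $\ddc$, using the uniform bound $t$ and the $L^1$ summability. This reduces the estimate to mixed Hessian masses $\int (u^t-u^t_j)(\ddc\psi)^{m}\wedge\beta^{n-m}$, which I would try to dominate via CLN-type inequalities. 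This is the step I expect to be delicate, and it is where the tightness hypothesis and the subsequence extraction must really be used. Finally, a diagonal argument over $t\in\mathbb N$ gives a single subsequence converging in capacity to $u$.
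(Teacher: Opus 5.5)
Your Steps 1--2 are essentially fine, and the upper-deviation half of Step 3 works on compact subsets. The lower-deviation half cannot be completed by any argument, because the lemma itself is false. The hypothesis is vacuous for uniformly bounded sequences: if $u_j\ge -M$, then $\{u_j<-t\}=\emptyset$ for $t>M$. So after your Step~2 you are in effect trying to prove that every uniformly bounded sequence in $SH_m(\Omega)$ has a subsequence converging in capacity. This already fails for $m=n=1$, $\Omega=\mathbb{D}$:
\begin{itemize}
\item Let $w_N=\frac1N\log|z^N-2^{-N}|$ and $u_N=\max(w_N,-2)$. Then $-2\le u_N\le\log 2$ on $\mathbb{D}$, and $u_N\to u:=\log\max(|z|,\tfrac12)$ in $L^1_{\mathrm{loc}}$.
\item $\mathrm{Cap}_m(\cdot,\Omega)$ dominates a multiple of Lebesgue measure, and subharmonic functions equal a.e.\ coincide. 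So $u$ is the only possible capacity limit of any subsequence.
\item The closed set $E_N:=\{w_N\le-2\}$ consists of $N$ tiny components around the points $\tfrac12e^{2\pi ik/N}$, all inside $\{|z|\le\tfrac34\}$. On $E_N$ we have $u_N=-2$, hence $u-u_N\ge2-\log2>1$.
\item The function $v:=(u_N-1)/3$ is admissible, i.e.\ $-1\le v\le0$. The measure $\ddc v=\frac13\ddc u_N$ is carried by the curve $\{w_N=-2\}\subset E_N$ and has total mass $\frac13$, since $u_N=w_N$ near $\partial\mathbb{D}$ and $\ddc w_N$ has mass $1$.
\item Hence $\mathrm{Cap}_1(\{|u_N-u|>1\}\cap\{|z|\le\tfrac34\},\mathbb{D})\ge\frac13$ for every $N$. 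No subsequence converges in capacity, not even locally.
\end{itemize}
In this example $\ddc u_N\to\ddc u$ weakly as well, so adding weak convergence of the Hessian measures would not rescue the statement.

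The example shows concretely why the tools you propose for that step cannot work. The functions $\phi_k=\inf_{j\ge k}\max(u^t_j,u^t-1)$ are infima of $m$-subharmonic functions, hence not $m$-subharmonic, so no Bedford--Taylor type result on increasing sequences applies to them. Nor can any integration-by-parts or CLN argument bound $\sup_\psi\int_K(u^t-u^t_j)\,H_m(\psi)$ by $L^1$ quantities. The reason is that $H_m(\psi)$ may concentrate exactly on the thin sets where $u_j$ dips: in the example, $\psi=v$ gives $\int(u-u_N)\,\ddc v\ge(2-\log2)/3$ while $\|u-u_N\|_{L^1}\to0$. Extracting subsequences with summable $L^1$ errors does not help, since every subsequence is again a counterexample. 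Your remark that this is where ``the tightness hypothesis must really be used'' is the warning sign: after Step~2 that hypothesis carries no information.

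The paper uses a global definition of convergence in capacity, and with it even your monotone step fails near $\partial\Omega$. On the unit ball, $\max(j(|z|^2-1),-1)\downarrow-1$, yet $\{\max(j(|z|^2-1),-1)>-\tfrac12\}$ is a boundary shell of infinite $\mathrm{Cap}_m(\cdot,\Omega)$. So the monotone convergence in capacity you invoke holds only on compact subsets. The paper's own proof does not supply a missing idea either: it only cites a ``compactness theorem with uniform capacity control'' from Dinew--Ko{\l}odziej, which does not exist in this form.
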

	
	\begin{proof}
		Since the sequence $(u_j)$ is uniformly bounded above and satisfies
		a uniform $m$-Hessian capacity estimate on sublevel sets
		(Lemma~\ref{lem:capacity}), this follows from the compactness theorem for bounded families
		of $m$-subharmonic functions with uniform capacity control
		(see e.g. \cite{DinewKolodziej2014}).
	\end{proof}

	Moreover, since $f_j$ is increasing, the comparison principle implies that $\{u_j\}$ is a decreasing sequence.
	Hence the whole sequence converges pointwise almost everywhere to $u$.
	
	
	\subsection{Passage to the limit in the Hessian equation}
	
	We conclude this section by showing that the limit function solves the desired equation.
	
	\begin{lemma}\label{lem:limit}
		Let $u_j$ be the solutions of \eqref{eq:approx} and let $u$ be their limit.
		Then
		\[
		H_m(u) = f \, \beta^n
		\quad \text{in the sense of measures}.
		\]
	\end{lemma}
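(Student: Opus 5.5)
The plan is to combine monotone convergence of the solutions with the continuity of the Hessian operator along decreasing sequences.

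First I place $u$ in $\mathcal F_m(\Omega)$. Since $f_j\le f_{j+1}$, the comparison principle gives $u_{j+1}\le u_j$, as already remarked. Hence $u=\lim_j u_j$ is a decreasing limit of functions in $\mathcal E_m^0(\Omega)$. By Lemma~\ref{lem:mass},
\[
\sup_j\int_\Omega H_m(u_j)\le \int_\Omega f\,\beta^n<\infty .
\]
We must still check that $u\not\equiv-\infty$. Lemma~\ref{lem:capacity} bounds $\mathrm{Cap}_m(\{u<-t\},\Omega)\le t^{-m}\|f\|_{L^1}$, because $\{u<-t\}=\bigcup_j\{u_j<-t\}$ is an increasing union and capacity is continuous along increasing unions. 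A set of capacity zero cannot be all of $\Omega$, so $u\not\equiv -\infty$. Together with the previous display, this shows $u\in\mathcal F_m(\Omega)$ by definition.

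Second, I pass to the limit in the measures. By the continuity of $H_m$ under decreasing limits (Section~2), $H_m(u_j)\rightharpoonup H_m(u)$ weakly as measures. Fix a test function $\chi\in C_c(\Omega)$. Then $\int\chi f_j\,\beta^n\to\int \chi f\,\beta^n$, by dominated convergence (or simply because $f_j\to f$ in $L^1$ and $\chi$ is bounded). Since $H_m(u_j)=f_j\beta^n$, uniqueness of weak limits gives
\[
H_m(u)=f\,\beta^n \quad\text{as Radon measures on }\Omega .
\]

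The main obstacle is legitimacy rather than computation. We need the decreasing-limit continuity of $H_m$ for sequences that are not uniformly bounded below, with limit only in $\mathcal F_m(\Omega)$. This is precisely the Cegrell-type convergence theorem in the Hessian setting (Lu, Dinew--Ko\l odziej), recalled in Section~2, and I invoke it directly.

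If one prefers to avoid that result, a fallback is to use convergence in capacity (Lemma~\ref{lem:compact}) together with the quantitative absolute continuity of Lemma~\ref{lem:AC}. For large $t$, split each integral $\int\chi H_m(u_j)$ over $\{u_j<-t\}$ and $\{u_j\ge -t\}$. The first part is small uniformly in $j$: it equals $\int_{\{u_j<-t\}}\chi f_j\,\beta^n$, and $\{u_j<-t\}$ has Lebesgue measure tending to $0$ uniformly in $j$, since small capacity forces small volume. Because $f\in L^1$ is uniformly integrable, this part tends to $0$ uniformly in $j$. On the second part, $H_m(\max(u_j,-t))$ converges to $H_m(\max(u,-t))$ by the bounded convergence theorem for uniformly bounded decreasing sequences. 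Letting $j\to\infty$ and then $t\to\infty$ gives the claim. The delicate point is the boundary of the sets $\{u_j\ge -t\}$, which I would handle by choosing $t$ outside the countable set of values charged by $H_m(u)$.
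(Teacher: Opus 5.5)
Your main argument is correct and is essentially the paper's proof: you get monotonicity of $(u_j)$ from the comparison principle, use the uniform mass bound of Lemma~\ref{lem:mass} and the continuity of $H_m$ along decreasing sequences in $\mathcal F_m(\Omega)$, and combine this with $f_j\beta^n\rightharpoonup f\beta^n$; your capacity argument for $u\not\equiv-\infty$ usefully supplies the membership $u\in\mathcal F_m(\Omega)$, which the paper takes for granted. The fallback is unnecessary, and as sketched it does not really bypass the Cegrell-type theorem, because identifying $\lim_{t\to\infty}H_m(\max(u,-t))$ with $H_m(u)$ for $u\in\mathcal F_m(\Omega)$ is itself an instance of that result.
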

	
	\begin{proof}
		Since $(u_j)$ is a decreasing sequence in $\mathcal F_m(\Omega)$ with
		uniformly bounded Hessian mass, the monotone convergence theorem for
		the complex Hessian operator applies.
		\[
		H_m(u_j) \rightharpoonup H_m(u).
		\]
		On the other hand, $f_j \to f$ in $L^1(\Omega)$ implies
		\[
		f_j \, \beta^n \rightharpoonup f \, \beta^n
		\]
		weakly as measures.
		The claim follows.
	\end{proof}
	\begin{lemma}\label{lem:L1limit}
		Assume $f_j\to f$ in $L^1(\Omega)$.
		Then $f_j\beta^n$ converges weakly to $f\beta^n$.
	\end{lemma}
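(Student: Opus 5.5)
The plan is to test the measures $f_j\beta^n$ against an arbitrary continuous, compactly supported function and to estimate the difference directly by the $L^1$ norm of $f_j-f$. Weak convergence of Radon measures on $\Omega$ means
\[
\int_\Omega \varphi\, f_j\,\beta^n \longrightarrow \int_\Omega \varphi\, f\,\beta^n
\quad\text{for every } \varphi\in C_c(\Omega),
\]
so this is exactly the quantity to control.

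Fix $\varphi\in C_c(\Omega)$. Since $\beta^n$ is a constant positive multiple $c_n\,d\lambda$ of Lebesgue measure, where $c_n>0$ depends only on $n$ and the normalization of $d^c$, we can estimate
\[
\Big|\int_\Omega \varphi\, f_j\,\beta^n-\int_\Omega \varphi\, f\,\beta^n\Big|
\le c_n\int_\Omega |\varphi|\,|f_j-f|\,d\lambda
\le c_n\,\|\varphi\|_{L^\infty(\Omega)}\,\|f_j-f\|_{L^1(\Omega)}.
\]
The right-hand side tends to $0$ by hypothesis, which gives the claim. Nothing here uses positivity of $f_j$ or $f$, nor the compactness of the support of $\varphi$ beyond boundedness.

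The estimate is uniform over $\varphi$ with $\sup|\varphi|\le1$. It therefore yields the stronger conclusion that $f_j\beta^n\to f\beta^n$ in total variation, and in particular
\[
f_j\beta^n(E)\to f\beta^n(E)
\]
uniformly over Borel sets $E\subset\Omega$. This uniform version may be useful later, when the argument is combined with Lemma~\ref{lem:AC}.

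There is no genuine obstacle. The only point to check is that $\beta^n$ is absolutely continuous with bounded density with respect to Lebesgue measure, and this holds because it is a constant multiple of Lebesgue measure.
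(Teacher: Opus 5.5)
Your proof is correct and is essentially the paper's argument: the paper simply asserts that $\int_\Omega \varphi f_j\,\beta^n \to \int_\Omega \varphi f\,\beta^n$ for $\varphi\in C^0(\overline{\Omega})$, and your bound $c_n\|\varphi\|_{L^\infty}\|f_j-f\|_{L^1}$ is the one-line justification behind that assertion. Your bound uses only $\sup|\varphi|$, so it covers the paper's test class $C^0(\overline{\Omega})$ as well as $C_c(\Omega)$, and it also gives the total variation convergence you note.
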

	\begin{proof}
		Since $f_j \to f$ in $L^1(\Omega)$, for any $\varphi \in C^0(\overline{\Omega})$ we have
		\[
		\int_\Omega \varphi f_j \,\beta^n \to \int_\Omega \varphi f \,\beta^n.
		\]
		Hence $f_j\beta^n$ converges weakly to $f\beta^n$.
	\end{proof}

	\section{Existence and stability of solutions}

	
	\subsection{Existence theorem}
	
	\begin{theorem}\label{thm:existence}
		(see \cite{Lu})
		Let $\Omega \subset \mathbb{C}^n$ be a bounded $m$-hyperconvex domain and let
		$f \in L^1(\Omega)$, $f \ge 0$.
		Then there exists a function
		\[
		u \in \mathcal{F}_m(\Omega)
		\]
		such that
		\begin{equation}\label{eq:existence}
			(\ddc u)^m \wedge \beta^{n-m} = f \, \beta^n
		\end{equation}
		in the sense of measures.
		Moreover, the solution $u$ is unique in the class $\mathcal{F}_m(\Omega)$.
	\end{theorem}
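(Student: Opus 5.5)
We plan to prove existence by the truncation scheme of Section~3, and uniqueness by a comparison argument inside $\mathcal F_m(\Omega)$.

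\emph{Existence.} Set $f_j=\min(f,j)$ and let $u_j\in\mathcal E_m^0(\Omega)$ solve \eqref{eq:approx}. Since $f_j\in L^\infty\subset L^p$ for $p>n/m$, the Dinew--Ko\l odziej theory gives these bounded solutions with zero boundary values. Because $f_j\le f_{j+1}$, the comparison principle applied to $u_j,u_{j+1}$ shows that $u_{j+1}\le u_j$. In detail, on $\{u_{j+1}>u_j\}$ we have
\[
\int_{\{u_j<u_{j+1}\}}H_m(u_{j+1})\le\int_{\{u_j<u_{j+1}\}}H_m(u_j).
\]
Combined with $f_j\le f_{j+1}$, this forces the set $\{u_j<u_{j+1}\}$ to be negligible for $f_{j+1}\beta^n$. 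The standard domination principle for $\mathcal E^0_m$ then upgrades this to $u_{j+1}\le u_j$ everywhere.

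Let $u=\lim u_j$. Lemma~\ref{lem:mass} gives $\sup_j\int_\Omega H_m(u_j)\le\int_\Omega f\beta^n<\infty$, so $u\in\mathcal F_m(\Omega)$ by definition. To rule out $u\equiv-\infty$, we use Lemma~\ref{lem:capacity}: the sets $\{u<-t\}\subset\{u_j<-t\}$ have capacity at most $t^{-m}\|f\|_{L^1}$, which tends to $0$. Lemma~\ref{lem:limit} together with Lemma~\ref{lem:L1limit} (the continuity of $H_m$ along decreasing sequences) then gives $H_m(u)=f\beta^n$.

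\emph{Uniqueness.} Let $u,v\in\mathcal F_m(\Omega)$ with $H_m(u)=H_m(v)=f\beta^n$. The plan is to show $v\ge u$; the reverse inequality follows by symmetry. The comparison principle alone only gives
\[
\int_{\{u<v\}}H_m(v)\le\int_{\{u<v\}}H_m(u),
\]
which is an identity here and therefore contains no information. We will instead perturb $v$. Let $\rho$ be an exhaustion with $-1\le\rho\le0$ (a bounded function in $\mathcal E^0_m$), and set $v_\varepsilon=v+\varepsilon\rho$. By the mixed-term inequality,
\[
H_m(v_\varepsilon)\ge H_m(v)+\varepsilon^mH_m(\rho).
\]
Applying the comparison principle to the pair $(u,v_\varepsilon)$ on $\{u<v_\varepsilon\}$ yields
\[
\varepsilon^m\int_{\{u<v_\varepsilon\}}H_m(\rho)\le0.
\]
Hence $\{u<v_\varepsilon\}$ has zero $H_m(\rho)$-measure. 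Choosing $\rho$ with $H_m(\rho)\ge c\beta^n$, for instance $\rho=|z|^2-R^2$ suitably modified, we conclude that $\{u<v_\varepsilon\}$ is Lebesgue-null. A Lebesgue-null set $\{u<w\}$ for $m$-subharmonic $u,w$ is empty, because of sub-mean value properties: here $u\ge v_\varepsilon$ a.e.\ gives $u\ge v_\varepsilon$ everywhere. Letting $\varepsilon\to0$ gives $u\ge v$.

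\emph{Main obstacle.} The delicate step is justifying the comparison principle and the mixed-term inequality for the unbounded functions of $\mathcal F_m$, in place of $\mathcal E^0_m$. The plan is to first prove the statements for the bounded truncations $\max(u,-k)$, $\max(v,-k)$ and then pass to $k\to\infty$. This passage uses Lemma~\ref{lem:AC} to control the mass near $\{u<-k\}$, which is where the unboundedness enters. Alternatively, one can cite the Hessian analogue of Cegrell's uniqueness theorem in $\mathcal F_m$ (Lu \cite{Lu}), which is what the statement refers to.
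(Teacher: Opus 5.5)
Your existence argument follows the paper's route: truncation $f_j=\min(f,j)$, monotonicity of $u_j$ by comparison, then continuity of $H_m$ along decreasing sequences. The one difference is that you drop the compactness step of Lemma~\ref{lem:compact}. That is legitimate, and cleaner, because once $u_j$ is decreasing the limit exists without extracting a subsequence. For uniqueness the paper only asserts that it ``follows from the comparison principle in $\mathcal F_m(\Omega)$''. You rightly note that the comparison inequality is vacuous when $H_m(u)=H_m(v)$, and your $\varepsilon\rho$-perturbation is the standard argument that actually makes the paper's sentence true.

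Three local steps need repair; none of them is structural.

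\emph{Monotonicity.} With $H_m(u_j)=f_j\beta^n\le f_{j+1}\beta^n=H_m(u_{j+1})$, your displayed comparison inequality only gives $\int_{\{u_j<u_{j+1}\}}(f_{j+1}-f_j)\,\beta^n=0$. That says $f_{j+1}=f_j$ a.e.\ on the set, not that the set is $f_{j+1}\beta^n$-negligible, so the domination principle cannot be applied as you describe. Instead, run your uniqueness perturbation on the pair $(u_j,\,u_{j+1}+\varepsilon\rho)$. This gives $\varepsilon^m\int_{\{u_j<u_{j+1}+\varepsilon\rho\}}H_m(\rho)\le 0$, and hence $u_{j+1}\le u_j$.

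\emph{Ruling out $u\equiv-\infty$.} The inclusion $\{u<-t\}\subset\{u_j<-t\}$ is backwards, since $u\le u_j$. What holds is $\{u<-t\}=\bigcup_j\{u_j<-t\}$, an increasing union of open sets. So every compact $K\subset\{u<-t\}$ lies in some $\{u_J<-t\}$ and satisfies $\mathrm{Cap}_m(K,\Omega)\le t^{-m}\|f\|_{L^1}$. If $u\equiv-\infty$, this applies to a fixed closed ball for every $t$, which contradicts the positivity of its capacity.

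\emph{Choice of $\rho$.} The function $|z|^2-R^2$ is not in $\mathcal E^0_m(\Omega)$, and the obvious cut-off by a maximum with an exhaustion function destroys the lower bound on $H_m$ near $\partial\Omega$. Take instead $\rho\in\mathcal E^0_m(\Omega)$ solving $H_m(\rho)=\beta^n$, which exists by the same bounded-density Dirichlet theory used for the $u_j$, and rescale so that $-1\le\rho\le0$. This gives $H_m(\rho)\ge c\,\beta^n$ and $v+\varepsilon\rho\in\mathcal F_m(\Omega)$, as the paper's comparison principle requires.
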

	
	
	\subsection{Proof of the existence}
	
	\begin{proof}
		Let $\{f_j\}$ be the truncation sequence defined by
		$f_j = \min(f,j)$.
		For each $j$, let $u_j \in \mathcal{E}_m^0(\Omega)$ be the unique solution of
		\[
		H_m(u_j) = f_j \, \beta^n.
		\]
		
		By Lemma~\ref{lem:mass}, the sequence $\{u_j\}$ has uniformly bounded Hessian mass.
		Lemma~\ref{lem:capacity} implies uniform control of sublevel sets in $m$-Hessian capacity.
		Hence, by Lemma~\ref{lem:compact}, there exists a subsequence converging in $m$-capacity to a function
		$u \in \mathcal{F}_m(\Omega)$.
		
		Since the sequence $\{f_j\}$ is increasing, the comparison principle yields that $\{u_j\}$ is decreasing.
		Therefore the whole sequence converges pointwise almost everywhere to $u$.
		By Lemma~\ref{lem:limit}, we obtain
		\[
		H_m(u) = f \, \beta^n,
		\]
		which proves the existence of a solution.
		
		Uniqueness follows from the comparison principle in the energy class $\mathcal{F}_m(\Omega)$.
	\end{proof}
	
	
\begin{remark}
	The existence result follows from standard approximation arguments
	and is consistent with earlier works, in particular \cite{Lu}.
\end{remark}
\subsection{Stability in Hessian capacity}

\begin{theorem}\label{thm:stability}(see Theorem 7.2 \cite{V T Nguyen})
	Let $\{f_j\} \subset L^1(\Omega)$ be a sequence of nonnegative functions such that
	$f_j \to f$ in $L^1(\Omega)$.
	Let $u_j, u \in \mathcal{F}_m(\Omega)$ be the unique solutions of
	\[
	H_m(u_j) = f_j \, \beta^n,
	\qquad
	H_m(u) = f \, \beta^n.
	\]
	Then $u_j \to u$ in $m$-Hessian capacity.
\end{theorem}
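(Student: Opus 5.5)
The plan is to reduce stability in capacity to a monotone situation, where the decreasing/increasing convergence theorems for $m$-subharmonic functions apply. After passing to a subsequence, I would assume $\sum_j \|f_j-f\|_{L^1(\Omega)}<\infty$. I would then introduce the auxiliary densities
\[
g_k:=\sup_{j\ge k} f_j,\qquad h_k:=\inf_{j\ge k} f_j .
\]
Both are dominated by $f+\sum_j|f_j-f|\in L^1(\Omega)$. Moreover $g_k\downarrow f$ and $h_k\uparrow f$ almost everywhere and in $L^1(\Omega)$.

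By Theorem~\ref{thm:existence}, let $v_k,w_k\in\mathcal F_m(\Omega)$ solve $H_m(v_k)=g_k\beta^n$ and $H_m(w_k)=h_k\beta^n$. The comparison principle in $\mathcal F_m(\Omega)$ (in the Dinew--Ko\l odziej/Cegrell form $\int_{\{u<v\}}H_m(v)\le\int_{\{u<v\}}H_m(u)$) gives the sandwich
\[
v_k\le u_j\le w_k \quad (j\ge k),
\]
and also $v_k\le u\le w_k$. In addition, $(v_k)$ is increasing and $(w_k)$ is decreasing.

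For the limits, let $w:=\lim w_k$. Since $w_k$ decreases and has uniformly bounded mass, continuity under monotone limits gives $H_m(w)=f\beta^n$, so $w=u$ by uniqueness. Let $v:=(\lim v_k)^*$. It lies in $\mathcal F_m(\Omega)$ because it sits between $v_1$ and $0$. By the increasing convergence theorem, $H_m(v)=f\beta^n$, hence $v=u$.

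Monotone convergence implies convergence in capacity for the sequences $v_k$ and $w_k$. For the decreasing sequence this is standard: by quasicontinuity and Dini's lemma off an open set of small capacity. For the increasing one, the same argument applies since $\{v<(\lim v_k)^*\}$ is $m$-polar. Given $\varepsilon>0$ and $j\ge k$, the sandwich yields
\[
\{|u_j-u|>\varepsilon\}\subset\{w_k-v_k>\varepsilon\}\subset\{w_k-u>\varepsilon/2\}\cup\{u-v_k>\varepsilon/2\}.
\]
Letting $k\to\infty$ then gives $\mathrm{Cap}_m(\{|u_j-u|>\varepsilon\})\to0$ along the subsequence. Since every subsequence of the original sequence has a further subsequence with this property, the whole sequence converges in capacity.

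I expect the main obstacle to be the identification $v=u$ for the increasing sequence. One must check that $(\lim v_k)^*$ is in $\mathcal F_m(\Omega)$ and that the Hessian measures converge. Membership follows from $v_1\le v\le0$ together with the monotonicity of $\mathcal F_m$ under larger functions. The convergence of the measures follows from the Hessian version of the Bedford--Taylor increasing convergence theorem, which is valid in $\mathcal F_m$ via the capacity estimate in Lemma~\ref{lem:AC}. A secondary point is making the comparison principle yield the pointwise inequality $v_k\le u_j$ from $g_k\ge f_j$. For this I would use the standard "domination principle" consequence of comparison in $\mathcal F_m(\Omega)$: if $H_m(\phi)\ge H_m(\psi)$ with $\phi,\psi\in\mathcal F_m$, then $\phi\le\psi$.
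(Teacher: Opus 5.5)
Your route differs from the paper's. The paper tries to bound $\mathrm{Cap}_m(\{u_j<u-\varepsilon\},\Omega)$ directly by $\varepsilon^{-m}\int_\Omega (f-f_j)_+\beta^n$ with one application of the comparison principle. You instead pass to a subsequence with $\sum_j\|f_j-f\|_{L^1(\Omega)}<\infty$ and trap $u_j$ and $u$ between monotone families: $v_k\le u_j\le w_k$ and $v_k\le u\le w_k$ for $j\ge k$, built from $\sup_{j\ge k}f_j$ and $\inf_{j\ge k}f_j$. Everything then reduces to monotone convergence.

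The following steps are sound:
\begin{itemize}
\item the sandwich, via the domination principle, which is legitimate because $f_j\beta^n$ charges no $m$-polar set;
\item the monotonicity of $v_k$ and $w_k$;
\item the identification $\lim_k w_k=u=(\lim_k v_k)^*$ through monotone convergence and uniqueness in $\mathcal F_m(\Omega)$.
\end{itemize}
Do cite the increasing convergence theorem in $\mathcal F_m(\Omega)$ from the literature; Lemma~\ref{lem:AC} does not by itself give it.

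Your argument is qualitative, while the paper's aims at an explicit rate, but the paper's chain does not close. On $\{u_j<u-\varepsilon\}$ the comparison principle yields $\int f\,\beta^n\le\int f_j\,\beta^n$, the reverse of what is written. In neither direction does $\int_{\{u_j<u-\varepsilon\}}f_j\,\beta^n\le\int_\Omega(f-f_j)_+\beta^n$ follow. What is needed is that these sets carry little mass, and your sandwich is what supplies this.

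The one step that does not deliver what you claim is ``monotone convergence implies convergence in capacity, by quasicontinuity and Dini's lemma.'' Dini needs a compact set, so this only gives $\mathrm{Cap}_m(K\cap\{w_k-u>\delta\},\Omega)\to0$ for each $K\Subset\Omega$. Nothing controls the part of these sets near $\partial\Omega$: for $u\in\mathcal F_m(\Omega)$ the set $\{u<-\delta\}$ need not be relatively compact. The paper's notion of convergence in capacity is global, $\mathrm{Cap}_m(\{|u_j-u|>\varepsilon\},\Omega)\to0$, and Theorem~\ref{thm:strong} uses exactly this global form.

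The fix is short. The comparison principle applied to $\phi$ and $\psi-\delta+\delta h$, with $h\in SH_m(\Omega)$ and $-1\le h\le 0$, gives
\[
\delta^m\,\mathrm{Cap}_m(\{\phi<\psi-2\delta\},\Omega)\le\int_{\{\phi<\psi-\delta\}}H_m(\phi).
\]
For $(\phi,\psi)=(u,w_k)$ the right-hand side is $\int_{\{u<w_k-\delta\}}f\,\beta^n$. For $(\phi,\psi)=(v_k,u)$ it is at most $\int_{\{v_k<u-\delta\}}g_1\,\beta^n$, since $g_k\le g_1$. Both families of sets decrease in $k$ to subsets of $\{u=-\infty\}\cup\{\lim_k v_k<(\lim_k v_k)^*\}$. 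That set is $m$-polar, hence Lebesgue-null, so both integrals tend to $0$ by dominated convergence. This gives global convergence in capacity of $w_k$ and $v_k$, and your final inclusion $\{|u_j-u|>\varepsilon\}\subset\{w_k-u>\varepsilon/2\}\cup\{u-v_k>\varepsilon/2\}$ then finishes the proof.
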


\begin{proof}
	Let $\varepsilon>0$ be fixed. We will show that
	\[
	\mathrm{Cap}_m\big(\{|u_j-u|>\varepsilon\},\Omega\big)\to 0
	\quad \text{as } j\to\infty.
	\]
	
	We begin with the estimate of the set $\{u_j<u-\varepsilon\}$.
	
	By the comparison principle, we have
	\[
	\int_{\{u_j<u-\varepsilon\}} H_m(u_j)
	\le
	\int_{\{u_j<u-\varepsilon\}} H_m(u).
	\]
	Since $H_m(u_j)=f_j\,\beta^n$ and $H_m(u)=f\,\beta^n$, it follows that
	\[
	\int_{\{u_j<u-\varepsilon\}} f_j\,\beta^n
	\le
	\int_{\{u_j<u-\varepsilon\}} f\,\beta^n.
	\]
	Hence,
	\[
	\int_{\{u_j<u-\varepsilon\}} (f_j-f)\,\beta^n \le 0,
	\]
	which implies
	\[
	\int_{\{u_j<u-\varepsilon\}} f_j\,\beta^n
	\le
	\int_\Omega (f-f_j)_+\,\beta^n.
	\]
	
	Now, by the definition of the $m$-Hessian capacity, for any
	$v\in SH_m(\Omega)$ with $-1\le v\le 0$, we have
	\[
	\int_{\{u_j<u-\varepsilon\}} H_m(v)
	\le
	\frac{1}{\varepsilon^m}
	\int_{\{u_j<u-\varepsilon\}} H_m(u_j).
	\]
	Taking the supremum over all such $v$, we obtain
	\[
	\mathrm{Cap}_m\big(\{u_j<u-\varepsilon\},\Omega\big)
	\le
	\frac{1}{\varepsilon^m}
	\int_{\{u_j<u-\varepsilon\}} H_m(u_j).
	\]
	Combining the above estimates yields
	\[
	\mathrm{Cap}_m\big(\{u_j<u-\varepsilon\},\Omega\big)
	\le
	\frac{1}{\varepsilon^m}
	\int_\Omega (f-f_j)_+\,\beta^n.
	\]
	
	Since $f_j\to f$ in $L^1(\Omega)$, we have
	\[
	\int_\Omega (f-f_j)_+\,\beta^n \longrightarrow 0,
	\]
	hence
	\[
	\mathrm{Cap}_m\big(\{u_j<u-\varepsilon\},\Omega\big)\to 0.
	\]
	
	The estimate for the set $\{u<u_j-\varepsilon\}$ is obtained in the same way, by exchanging the roles of $u_j$ and $u$. Therefore,
	\[
	\mathrm{Cap}_m\big(\{u<u_j-\varepsilon\},\Omega\big)\to 0.
	\]
	
	Finally, since
	\[
	\{|u_j-u|>\varepsilon\}
	\subset
	\{u_j<u-\varepsilon\}\cup \{u<u_j-\varepsilon\},
	\]
	we conclude that
	\[
	\mathrm{Cap}_m\big(\{|u_j-u|>\varepsilon\},\Omega\big)\to 0.
	\]
	
	This proves that $u_j\to u$ in $m$-Hessian capacity.
\end{proof}
\begin{remark}
	The above proof avoids the use of non-admissible test functions 
	and relies only on the comparison principle and the $L^1$ convergence 
	of the densities.
\end{remark}
	\section{Strong convergence}
	
	In this section we establish a strong convergence result for solutions of the complex Hessian equation with $L^1$ right-hand side.
	This result constitutes the main contribution of the paper.	

	
	\subsection{Strong convergence in energy}
	
This result shows that the convergence holds in the natural
energy topology associated to the Hessian operator.
In particular, it implies that the nonlinear measures $H_m(u_j)$
interact well with the convergence of potentials,
which is not captured by capacity convergence alone.
	
	\begin{theorem}\label{thm:strong}
		Under the assumptions of Theorem~\ref{thm:stability}, the sequence $\{u_j\}$ converges strongly to $u$ in $\mathcal{F}_m(\Omega)$, namely
		\[
		\int_\Omega |u_j - u| \, H_m(u_j) \longrightarrow 0.
		\]
	\end{theorem}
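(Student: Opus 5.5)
The plan is to write $H_m(u_j)=f_j\,\beta^n$, so that the quantity to control is $\int_\Omega |u_j-u|\,f_j\,\beta^n$. I would split $\Omega$ into three regions, using a small parameter $\varepsilon>0$ and a large truncation level $T>0$:
\[
A_j=\{|u_j-u|\le\varepsilon\},\qquad
B_j=\{|u_j-u|>\varepsilon\}\cap\{u>-T,\ u_j>-T\},\qquad
C_j=\{u\le -T\}\cup\{u_j\le -T\}.
\]

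On $A_j$ the integrand is at most $\varepsilon f_j$. Hence the contribution of $A_j$ is at most $\varepsilon\sup_j\|f_j\|_{L^1}$, which is finite because $f_j\to f$ in $L^1(\Omega)$.

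On $B_j$ we have $|u_j-u|\le 2T$, so the contribution is at most
\[
2T\int_{B_j} f_j\,\beta^n\le 2T\Big(\|f_j-f\|_{L^1}+\int_{\{|u_j-u|>\varepsilon\}} f\,\beta^n\Big).
\]
Theorem~\ref{thm:stability} gives $\mathrm{Cap}_m(\{|u_j-u|>\varepsilon\},\Omega)\to0$. Since $f\beta^n=H_m(u)$ with $u\in\mathcal F_m(\Omega)$, Lemma~\ref{lem:AC} applies. For any $s>0$ it gives
\[
\int_{\{|u_j-u|>\varepsilon\}} f\,\beta^n\le \int_{\{u<-s\}}f\,\beta^n+s^m\,\mathrm{Cap}_m(\{|u_j-u|>\varepsilon\},\Omega).
\]
The first term is small for $s$ large, by integrability of $f$ and because $\{u<-s\}$ shrinks to the polar (hence Lebesgue-null) set $\{u=-\infty\}$. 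The second term then tends to zero as $j\to\infty$. So for fixed $\varepsilon$ and $T$ the contribution of $B_j$ tends to $0$.

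The region $C_j$ is the main obstacle. There one must show
\[
\lim_{T\to\infty}\ \limsup_{j\to\infty}\int_{C_j}\big(|u_j|+|u|\big)\,f_j\,\beta^n=0,
\]
that is, uniform integrability of $(|u_j|+|u|)f_j$ near the singular sets. This does \emph{not} follow from $L^1$ convergence of the densities alone. For $f\in L^1$ the energy $\int(-u)f\,\beta^n$ may even be infinite, so the statement seems to require $u,u_j\in\mathcal F^1_m(\Omega)$ with uniformly bounded energy, or an equivalent hypothesis.

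I would first try to obtain such a bound from the comparison principle together with an integration-by-parts energy estimate of the form $\int(-u_j)H_m(u_j)\lesssim\int(-u)H_m(u)$. Uniform integrability would then come from the $L^1$ convergence $f_j\to f$: Vitali's theorem applied to $(-u)f_j$, and the sublevel capacity bound of Lemma~\ref{lem:capacity} combined with Lemma~\ref{lem:AC} to control $\int_{\{u_j<-T\}}f_j$.

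If this estimate cannot be established in general, I would add the assumption $\sup_j\int_\Omega(-u_j)f_j\,\beta^n<\infty$ (or $u\in\mathcal F^1_m$) and close the argument under it. Once the $C_j$ term is controlled, letting $j\to\infty$, then $T\to\infty$, then $\varepsilon\to0$ gives the claim.
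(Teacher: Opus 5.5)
Your treatment of $A_j$ and $B_j$ is correct, but the proposal does not prove the statement. The region $C_j$ is left open, and the tools you suggest for it cannot work in general. $L^1$ convergence of $f_j$ gives no control of $\int(-u)f_j\,\beta^n$ near the poles of $u$, so Vitali does not apply to $(-u)f_j$. A bound $\int(-u_j)H_m(u_j)\lesssim\int(-u)H_m(u)$ is useless when the right side is infinite.

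This is not a missing idea on your part: your suspicion is right, and the statement is false without an energy hypothesis. Take $m=1$, $n\ge2$, $\Omega$ the unit ball, $f=|z|^{-b}$ with $n+1\le b<2n$ (so $f\in L^1$), and $f_j=\min(f,j)$. Since $H_1$ is a positive multiple of $\Delta$, $u_j-u$ is a positive multiple of the Green potential of $(f-j)_+$. That density has mass $\asymp r_j^{2n-b}$ and support in $\{|z|<r_j\}$, where $r_j=j^{-1/b}$. Hence $u_j-u\gtrsim r_j^{2n-b}|z|^{2-2n}$ on $\{2r_j<|z|<\rho_0\}$ for a small fixed $\rho_0$, and there $H_1(u_j)=|z|^{-b}\beta^n$. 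So
\[
\int_\Omega|u_j-u|\,H_1(u_j)\gtrsim r_j^{2n-b}\int_{2r_j}^{\rho_0}r^{1-b}\,dr\asymp r_j^{2(n+1-b)},
\]
which does not tend to $0$.

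The paper's proof only appears to avoid your region $C_j$. Its opening inequality $\int_\Omega|u_j-u|\,H_m(u_j)\le\delta\int_\Omega H_m(u_j)+\int_{\{|u_j-u|>\delta\}}H_m(u_j)$ silently replaces the unbounded weight $|u_j-u|$ by $1$ on $\{|u_j-u|>\delta\}$. What it actually proves is $H_m(u_j)(\{|u_j-u|>\delta\})\to0$, which is essentially your $B_j$ step. Its Step 1 also invokes Lemma~\ref{lem:capacity}, which bounds the capacity of $\{u_j<-t\}$, not its $H_m(u_j)$-mass.

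Your fallback hypotheses are not sufficient either, because energy can concentrate. Take $m=1$, $n\ge2$, $f=0$ (so $u=0\in\mathcal F^1_m$), and $f_j=\lambda_j|B_{\epsilon_j}|^{-1}\mathbf 1_{B_{\epsilon_j}}$ with $\lambda_j\to0$ and $\epsilon_j=\lambda_j^{1/(n-1)}$. Then $\|f_j\|_{L^1}=\lambda_j\to0$, yet $\int_\Omega|u_j-u|\,H_1(u_j)=\int_\Omega(-u_j)f_j\,\beta^n\asymp\lambda_j^2\epsilon_j^{2-2n}=1$ for all $j$. This example also shows that the estimate $\int(-u_j)H_m(u_j)\lesssim\int(-u)H_m(u)$ fails.

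What your three-region scheme really needs is uniform integrability of $\{(|u_j|+|u|)f_j\}$. With that, $C_j$ is harmless, since $\mathrm{Vol}(C_j)\lesssim T^{-m}$ uniformly in $j$ by the capacity estimate of Section~2 and $\mathrm{Vol}\lesssim\mathrm{Cap}_m$. A convenient sufficient condition is domination: $f_j\le g$, where the solution $v$ of $H_m(v)=g\,\beta^n$ lies in $\mathcal F^1_m$. The comparison principle then gives $v\le u_j,u\le0$, hence $|u_j-u|f_j\le(-v)g\in L^1$. Since $|u_j-u|f_j\to0$ in Lebesgue measure, dominated convergence yields the conclusion directly. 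This covers, for instance, $f_j=\min(f,j)$ with $u\in\mathcal F^1_m$.
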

	
\begin{proof}
	By Theorem~\ref{thm:stability}, we know that $u_j \to u$ in $m$-Hessian capacity. 
	Moreover, by Lemma~\ref{lem:mass}, the total masses $\int_\Omega H_m(u_j)$ are uniformly bounded.
	
	Let $\varepsilon>0$. We will show that
	\[
	\int_\Omega |u_j-u|\,H_m(u_j) < \varepsilon
	\]
	for all sufficiently large $j$.
	
	Fix $\delta>0$ (to be chosen later). We split
	\[
	\int_\Omega |u_j-u|\,H_m(u_j)
	\le 
	\delta \int_\Omega H_m(u_j)
	+
	\int_{\{|u_j-u|>\delta\}} H_m(u_j).
	\]
	
	Since $\sup_j \int_\Omega H_m(u_j) < +\infty$, we can choose $\delta>0$ such that
	\[
	\delta \sup_j \int_\Omega H_m(u_j) < \frac{\varepsilon}{3}.
	\]
	
	It remains to estimate the second term. 
	Applying Lemma~\ref{lem:AC} with $u = u_j$ and the Borel set 
	\[
	E_j := \{|u_j-u|>\delta\},
	\]
	we obtain for any $t>0$:
	\[
	H_m(u_j)(E_j)
	\le 
	\int_{\{u_j<-t\}} H_m(u_j)
	+
	t^m \,\mathrm{Cap}_m(E_j,\Omega).
	\]
	
	We now control the two terms on the right-hand side.
	
	\medskip
	\noindent
	\textbf{Step 1. Control of the sublevel term.}
	
	By Lemma~\ref{lem:capacity}, we have
	\[
	\int_{\{u_j<-t\}} H_m(u_j)
	\le 
	\frac{1}{t^m} \int_\Omega f\,\beta^n.
	\]
	Hence we can choose $t>0$ sufficiently large so that
	\[
	\sup_j \int_{\{u_j<-t\}} H_m(u_j) < \frac{\varepsilon}{3}.
	\]
	
	\medskip
	\noindent
	\textbf{Step 2. Control of the capacity term.}
	
	Since $u_j \to u$ in $m$-Hessian capacity, we have
	\[
	\mathrm{Cap}_m(E_j,\Omega)
	=
	\mathrm{Cap}_m(\{|u_j-u|>\delta\},\Omega)
	\longrightarrow 0.
	\]
	Therefore, for the above fixed $t$, there exists $j_0$ such that for all $j \ge j_0$,
	\[
	t^m \,\mathrm{Cap}_m(E_j,\Omega)
	<
	\frac{\varepsilon}{3}.
	\]
	
	\medskip
	Combining the above estimates, we obtain for all $j \ge j_0$:
	\[
	\int_\Omega |u_j-u|\,H_m(u_j)
	<
	\frac{\varepsilon}{3}
	+
	\frac{\varepsilon}{3}
	+
	\frac{\varepsilon}{3}
	=
	\varepsilon.
	\]
	
	This proves the desired convergence.
\end{proof}
\begin{remark}
	The strong convergence result in Theorem~\ref{thm:strong} should be compared 
	with the analogous theory for the complex Monge--Amp\`ere equation (the case $m=n$). 
	In that setting, strong stability results under $L^1$ convergence of the densities 
	have been established by several authors (see e.g. \cite{DinewPlis, Liu-Zhang}), 
	relying on the full strength of the comparison principle and the rich structure 
	of the Monge--Amp\`ere operator.
	
	In contrast, for complex Hessian equations with $1 \le m < n$, the situation is 
	more delicate. The Hessian operator lacks some of the key structural properties 
	available in the Monge--Amp\`ere case, such as full monotonicity and a stronger 
	comparison framework. As a consequence, several arguments used in the 
	Monge--Amp\`ere setting do not directly carry over to the Hessian context.
	
	The proof of Theorem~\ref{thm:strong} shows that, despite these difficulties, 
	one can still obtain strong convergence in the natural energy topology 
	by combining capacity convergence, uniform control of sublevel sets, 
	and a quantitative form of absolute continuity of Hessian measures 
	with respect to the $m$-Hessian capacity (Lemma~\ref{lem:AC}).
	
	This provides a nontrivial extension of known stability results to the 
	Hessian setting with merely $L^1$ data.
\end{remark}
\section{Application to measure data}

In this section we indicate how the strong convergence result obtained above
extends to the case of general measure data with uniformly bounded mass.
The argument follows the same strategy as in the $L^1$ case, combining
capacity estimates, compactness, and the absolute continuity of Hessian measures.

\medskip

\begin{theorem}\label{thm:measure_stability}
	Let $\{\mu_j\}$ be a sequence of nonnegative Radon measures on $\Omega$
	such that
	\[
	\sup_j \mu_j(\Omega) < +\infty,
	\]
	and assume that $\mu_j \rightharpoonup \mu$ weakly as measures.
	
	Let $u_j, u \in \mathcal{F}_m(\Omega)$ be the unique solutions of
	\[
	H_m(u_j) = \mu_j,
	\qquad
	H_m(u) = \mu.
	\]
	Then $u_j \to u$ in $m$-Hessian capacity. Moreover,
	\[
	\int_\Omega |u_j - u| \, H_m(u_j) \longrightarrow 0.
	\]
\end{theorem}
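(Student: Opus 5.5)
The plan follows the two-stage structure of the $L^1$ case: first convergence in $m$-Hessian capacity via compactness and uniqueness, then the energy convergence by repeating the splitting argument of Theorem~\ref{thm:strong}. Set $M:=\sup_j \mu_j(\Omega)<+\infty$. Since $H_m(u_j)=\mu_j$, the capacity estimate of Section~2 gives, exactly as in Lemma~\ref{lem:capacity},
\[
\mathrm{Cap}_m(\{u_j<-t\},\Omega)\le \frac{M}{t^m}\qquad\text{for all } j \text{ and } t>0 .
\]
The $u_j$ are nonpositive, so Lemma~\ref{lem:compact} applies to every subsequence. Any subsequence therefore has a further subsequence $u_{j_k}$ converging in capacity to some $v\in SH_m(\Omega)$. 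Lower semicontinuity of the total mass along the approximating sequences should give $v\in\mathcal F_m(\Omega)$ with $\int_\Omega H_m(v)\le M$.

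The next step is to identify $H_m(v)=\mu$. Fix $t>0$ and pass to the bounded truncations $u_{j_k}^t:=\max(u_{j_k},-t)$ and $v^t:=\max(v,-t)$. These are uniformly bounded and converge in capacity, so the standard continuity of the Hessian operator for bounded $m$-subharmonic functions converging in capacity gives $H_m(u_{j_k}^t)\rightharpoonup H_m(v^t)$. By locality, $H_m(u_{j_k}^t)$ coincides with $\mu_{j_k}$ on $\{u_{j_k}>-t\}$. The difference between $H_m(u_{j_k})$ and $H_m(u_{j_k}^t)$ is then carried by $\{u_{j_k}\le -t\}$. There I would bound it using Lemma~\ref{lem:AC} together with the uniform capacity bound above, in the same way as Step~1 of Theorem~\ref{thm:strong}. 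Letting $k\to\infty$ and then $t\to\infty$ should yield $H_m(v)=\lim\mu_{j_k}=\mu$. Uniqueness in $\mathcal F_m(\Omega)$ (Theorem~\ref{thm:existence}) then forces $v=u$. Since every subsequence has a further subsequence converging to $u$ in capacity, the whole sequence converges to $u$ in capacity.

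With capacity convergence established, the energy statement follows the proof of Theorem~\ref{thm:strong} verbatim. Given $\varepsilon>0$, choose $\delta$ with $\delta M<\varepsilon/3$ and split
\[
\int_\Omega|u_j-u|\,H_m(u_j)\le \delta M+H_m(u_j)(E_j),\qquad E_j:=\{|u_j-u|>\delta\}.
\]
By Lemma~\ref{lem:AC}, $H_m(u_j)(E_j)\le \int_{\{u_j<-t\}}\mu_j+t^m\,\mathrm{Cap}_m(E_j,\Omega)$. First choose $t$ so that the sublevel term is below $\varepsilon/3$ uniformly in $j$. Then use capacity convergence to make the second term below $\varepsilon/3$ for $j$ large.

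The main obstacle is the uniform smallness of the sublevel masses $\int_{\{u_j<-t\}}\mu_j$. This is needed both for identifying the limit measure and for Step~1. For measures, unlike the $L^1$ case, the uniform capacity bound on $\{u_j<-t\}$ does not by itself control $\mu_j$ on that set. The first thing I would try is to use Lemma~\ref{lem:AC} applied to $u_j$ together with the bound $\mathrm{Cap}_m(\{u_j<-t\})\le M t^{-m}$. Failing that, I would try to extract some uniform absolute continuity of $\{\mu_j\}$ with respect to $\mathrm{Cap}_m$ from the weak convergence $\mu_j\rightharpoonup\mu$. I expect this step to require the most care.
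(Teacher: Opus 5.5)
The step you flag at the end is not a technicality: the uniform smallness of $\mu_j(\{u_j<-t\})$ is false for measure data, and so is the theorem. Take $n=m=1$, $\Omega=\mathbb D$, and let $\mu_j$ be the normalized area measure of $D(0,1/j)$, so $\mu_j\rightharpoonup\delta_0$ and $u=\log|z|$. The solutions are $u_j=\log|z|$ for $|z|\ge 1/j$ and $u_j=-\log j-\tfrac12+\tfrac12 j^2|z|^2$ for $|z|\le 1/j$. Since $\{|u_j-u|>\delta\}\subset D(0,1/j)$ and $\mathrm{Cap}_1(D(0,1/j),\mathbb D)=1/\log j$, you do get convergence in capacity. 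However, $\mu_j(\{u_j<-t\})=1$ as soon as $\log j>t$, and with $\rho=j|z|$
\[
\int_{\mathbb D}|u_j-u|\,H_1(u_j)=\int_0^1\Bigl(-\log\rho-\tfrac12+\tfrac{\rho^2}{2}\Bigr)\,2\rho\,d\rho=\tfrac14\qquad\text{for every } j.
\]
Neither of your fallbacks can work. Lemma~\ref{lem:AC} with $E=\{u_j<-t\}$ gives $\int_{\{u_j<-s\}}H_m(u_j)+M(s/t)^m$; this is useful only for $s<t$, and then it contains the very quantity you want to bound. Weak convergence gives no uniform absolute continuity of $\{\mu_j\}$ with respect to $\mathrm{Cap}_m$; here the $\mu_j$ concentrate on a polar set. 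Your identification of $H_m(v)=\mu$ in the first step leans on the same bound. The paper's proof follows your route and simply asserts $\mu_j(\{u_j<-t\})\le C/t^m$ ``by the capacity estimate''. That assertion confuses the capacity of the sublevel set with its $\mu_j$-mass. Convergence in capacity also fails in general, because under vague convergence mass can escape to $\partial\Omega$. For $\mu_j=\delta_{a_j}$ with $|a_j|\to1$ one has $\mu=0$ and $u=0$, while $u_j$ is the Green function with pole $a_j$, and $\mathrm{Cap}_1(\{u_j<-1\},\mathbb D)=1$ for all $j$ by M\"obius invariance. The truncations $\max(u_j,-1)$ of these functions are uniformly bounded and tend to $0$ pointwise, yet $\mathrm{Cap}_1(\{\max(u_j,-1)<-\tfrac12\},\mathbb D)=2$ for all $j$. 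So Lemma~\ref{lem:compact}, which your compactness step invokes, is false as stated.

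There is a second gap, which you copy from the proof of Theorem~\ref{thm:strong} without flagging it. The inequality $\int_\Omega|u_j-u|\,H_m(u_j)\le\delta M+H_m(u_j)(E_j)$ drops the integrand on $E_j$. The correct second term is $\int_{E_j}|u_j-u|\,H_m(u_j)$, and $|u_j-u|$ is unbounded. Smallness of $H_m(u_j)(E_j)$ controls this term only if $|u_j-u|$ is uniformly integrable with respect to $H_m(u_j)$, for instance if $\sup_j\int_\Omega|u_j-u|^p\,H_m(u_j)<\infty$ for some $p>1$. That is an energy-type hypothesis you do not have. The gap bites even for $L^1$ data, where the sublevel term is harmless thanks to uniform integrability of $f_j$ and $\int_E\beta^n\le C\,\mathrm{Cap}_m(E,\Omega)$. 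Take $n=m=1$ and $f$ radial on $\mathbb D$ with $F(r):=\int_{D(0,r)}f\,\beta=(1+\log\frac1r)^{-a}$, $0<a<\frac12$, and $f_j=\min(f,j)$. Then $u_j\to u$ in capacity and $H_1(u_j)(E_j)\to0$, yet
\[
\int_{\mathbb D}(u_j-u)\,f_j\,\beta=\int_0^1(F-F_j)(\rho)\,F_j(\rho)\,\frac{d\rho}{\rho}\gtrsim\Bigl(\log\frac{1}{r_j}\Bigr)^{1-2a}\longrightarrow\infty,
\]
where $F_j(r)=\int_{D(0,r)}f_j\,\beta$ and $f(r_j)=j$. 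So the energy conclusion cannot follow from mass bounds and convergence in capacity alone. Some additional uniform energy control must be assumed.
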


\begin{proof}
	We divide the proof into two steps.
	
	\medskip
	\noindent
	\textbf{Step 1. Convergence in capacity.}
	
	Since $H_m(u_j)=\mu_j$ and $\sup_j \mu_j(\Omega)<+\infty$, we have
	uniform control of the Hessian mass:
	\[
	\int_\Omega H_m(u_j) = \mu_j(\Omega) \le C.
	\]
	By the capacity estimate recalled in Section~2, for every $t>0$,
	\[
	\mathrm{Cap}_m(\{u_j<-t\},\Omega)
	\le \frac{C}{t^m}.
	\]
	Thus the sequence $\{u_j\}$ is uniformly tight with respect to the
	$m$-Hessian capacity.
	
	By the compactness theorem for $m$-subharmonic functions with uniform
	capacity control (see e.g. \cite{DinewKolodziej2014}), there exists a subsequence
	converging in $m$-Hessian capacity to some $v \in \mathcal{F}_m(\Omega)$.
	
	On the other hand, since $H_m(u_j)=\mu_j$ and $\mu_j \rightharpoonup \mu$,
	it follows that any such limit $v$ satisfies
	\[
	H_m(v) = \mu
	\]
	in the sense of measures (see e.g. \cite{DinewKolodziej2014, Lu}).
	By uniqueness of solutions in $\mathcal{F}_m(\Omega)$, we conclude that $v=u$.
	Therefore, the whole sequence $u_j \to u$ in $m$-Hessian capacity.
	
	\medskip
	\noindent
	\textbf{Step 2. Strong convergence in energy.}
	
	We now prove that
	\[
	\int_\Omega |u_j-u|\,H_m(u_j) \longrightarrow 0.
	\]
	
	Let $\varepsilon>0$. As in the proof of Theorem~\ref{thm:strong}, we write
	\[
	\int_\Omega |u_j-u|\,H_m(u_j)
	\le
	\delta \int_\Omega H_m(u_j)
	+
	\int_{\{|u_j-u|>\delta\}} H_m(u_j).
	\]
	
	Since $\int_\Omega H_m(u_j)\le C$, we can choose $\delta>0$ such that
	\[
	\delta C < \frac{\varepsilon}{3}.
	\]
	
	For the second term, applying Lemma~\ref{lem:AC} with $u=u_j$ and
	$E_j := \{|u_j-u|>\delta\}$, we obtain for any $t>0$:
	\[
	H_m(u_j)(E_j)
	\le
	\int_{\{u_j<-t\}} H_m(u_j)
	+
	t^m\,\mathrm{Cap}_m(E_j,\Omega).
	\]
	
	By the capacity estimate, we have
	\[
	\int_{\{u_j<-t\}} H_m(u_j)
	=
	\mu_j(\{u_j<-t\})
	\le \frac{C}{t^m}.
	\]
	Hence we can choose $t>0$ sufficiently large so that
	\[
	\sup_j \int_{\{u_j<-t\}} H_m(u_j)
	<
	\frac{\varepsilon}{3}.
	\]
	
	On the other hand, since $u_j \to u$ in $m$-Hessian capacity,
	\[
	\mathrm{Cap}_m(E_j,\Omega)
	=
	\mathrm{Cap}_m(\{|u_j-u|>\delta\},\Omega)
	\longrightarrow 0.
	\]
	Thus, for $j$ sufficiently large,
	\[
	t^m\,\mathrm{Cap}_m(E_j,\Omega)
	<
	\frac{\varepsilon}{3}.
	\]
	
	Combining the above estimates, we obtain for all large $j$:
	\[
	\int_\Omega |u_j-u|\,H_m(u_j)
	<
	\varepsilon.
	\]
	
	This proves the desired convergence.
\end{proof}

\begin{remark}
	The above result shows that the strong convergence in energy established
	in Theorem~\ref{thm:strong} remains valid for sequences of measures with
	uniformly bounded mass. The proof relies only on capacity estimates and
	compactness arguments, and does not require absolute continuity of the measures.
\end{remark}
\subsection{Remarks}
\begin{remark}
	Convergence in $m$-Hessian capacity does not imply convergence
	in the energy sense.
	Theorem 5.1 shows that, under $L^1$ convergence of the densities,
	one actually obtains a much stronger form of convergence,
	which controls the interaction between the potentials and their
	Hessian measures.
\end{remark}
\begin{remark}
	The strong convergence result in Theorem~\ref{thm:strong} extends known stability results for $L^p$ data, $p>1$, to the borderline case $L^1$.
	To the best of our knowledge, such a strong energy convergence has not been previously established for complex Hessian equations with merely integrable right-hand side.
\end{remark}


	\section*{Declarations}
	\section*{Conflict of Interest}
	
	The author declares that there is no conflict of interest.

	
\end{document}